\title{Convergence Analysis of the Algorithm in
"Efficient and Robust Discrete Conformal Equivalence with Boundary" \protect\cite{Campen:2021}}
\author{Denis Zorin}
\date{August 2021}
\newcommand{\bbR}{\mathbb{R}}
\newcommand*{\ve}[1]{\bm{#1}}
\newcommand{\df}{\nabla f}
\newcommand{\ddf}{\nabla^2 f}
\newcommand{\dfu}{\nabla f(u)}
\newcommand{\ddfu}{\nabla^2 f(u)}
\newcommand{\tf}{\tilde{f}}
\newcommand{\pg}{\tilde{g}}
\newcommand{\uk}{u^{(k)}}
\newcommand{\ukp}{u^{(k+1)}}
\newcommand{\uinit}{u^{(0)}}
\newcommand{\lmsq}{\lambda^2}
\newcommand{\cE}{{\cal E}}
\newtheorem{thm}{Theorem}
\begin{document}

\maketitle
\section{Newton method with energy-free line search} 
We start with analyzing the general form of the algorithm  used in \cite{Campen:2021}:

\begin{algorithm}[h!]
\SetAlgoLined
\DontPrintSemicolon
\SetKwInOut{Input}{Input}
\SetKwInOut{Output}{Output}
\SetKwProg{Fn}{Function}{:}{}
\SetKwRepeat{Do}{do}{while}
\SetKw{Not}{not}
\Fn{{\scshape{LineSearchNewton}}$(f,\uinit )$}
{
 $u\gets \uinit$\;
 $g \gets g(\uinit)$\;
 \While{$\|g\| > \varepsilon$ }{
   $g \gets g(u)$\tcp*{gradient}
   $H \gets H(u)$\tcp*{Hessian}
   $d \gets -H^{-1}g$\tcp*{Newton direction}
   $u \gets$ u+\scshape{LineSearch}$(u, d)d$ \tcp*{Newton step}
 }
 \Return $u$\;
}

\vspace{4pt}

\Fn{\scshape{LineSearch}$( \ve{u}, \ve{d})$}
{
   \If{$\frac{1}{2}\left( d^T g(u+d/2) +  d^T g( u+d) \right) \leq \alpha d^Tg(u)$ }
    { 
    \Return $1$\;
    }
    $t \gets 1/2$\;
    \While{true}{
    \If{$d^T g(u+td) \leq 0$ }
    { 
    \Return $t$\;
    }
    $t \gets \frac{1}{2}t$ \tcp*{backtracking line search}}
}
 \caption{Newton minimization with energy-free line search\label{alg:Newton}}
\end{algorithm}
In this algorithm, $\alpha$ is chosen in a way similar to the constant in the Armijo  
condition. This Armijo-like condition, not relying on function evaluation, is used at the first iteration of the line search---it could also be used on all iterations, but one expects a slightly 
faster convergence at the initial (damped) Newton phase without it, before
the quadratic convergence regime is reached.


Define 
\[
S = \{ u \in \bbR^n | f(u) \leq f(u^{(0)})\}\]
where $u^{(0)}$ is the starting point used by the algorithm,
i.e., $S$ is the sublevel set of $f(u)$ for $f(u^{0})$. 
We assume this set is bounded (we will show this is the case for the specific function that appears in our equations). It is also convex, as the function $f$
is convex. 

Below, we use $\|\cdot\|$ to denote the standard vector $\ell_2$-norm. 
\begin{thm}
Suppose a convex function $f(u): \bbR^n \rightarrow \bbR$
is twice differentiable and its Hessian has singular values bounded by $m$ and $M$ on $S$ from below and above respectively (i.e., is strongly convex on $S$), and the Hessian is Lipschitz  on $S$ with constant $L$,
i.e., 
\[
\| \nabla^2 f(u) - \nabla^2 f(w)\| \leq L\|u-w\|
\]

Then Algorithm~\ref{alg:Newton} converges to the (unique) minimum of the function, and the rate of convergence is quadratic after an iteration $k_0(m, M, L)$.\label{th:general}
\end{thm}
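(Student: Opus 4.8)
The plan is to follow the classical two-phase analysis of Newton's method---a globally convergent ``damped'' phase followed by a locally quadratic phase---with essentially all of the new work going into the nonstandard, energy-free line search. Throughout I write $\phi(t) = f(u + t d)$ for the restriction of $f$ to the search ray, so that $\phi'(t) = d^{T}\nabla f(u+td)$ is exactly the quantity tested by {\scshape LineSearch}, and $\phi$ is convex with $\phi'$ nondecreasing. The first thing I would record is that every accepted step is a descent step, so the iterates never leave $S$ and the constants $m,M,L$ stay valid at every iterate; since $S$ is closed (a sublevel set of a continuous function) and bounded it is compact, so $f$ is bounded below on it, and strong convexity guarantees a unique minimizer $u^{\star}$.

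The heart of the argument is the line-search lemma, which I expect to be the main obstacle precisely because the test uses directional derivatives and never function values. The key observation is that the Armijo-like test implies the genuine Armijo decrease: since $\phi'$ is nondecreasing, a right-endpoint bound on each half-interval gives
\[
\phi(1) - \phi(0) = \int_0^1 \phi'(t)\,dt \le \tfrac12\phi'(\tfrac12) + \tfrac12\phi'(1) = \tfrac12\big(\phi'(\tfrac12)+\phi'(1)\big) \le \alpha\,\phi'(0),
\]
so whenever the first branch returns $t=1$ we have the standard sufficient decrease $f(u+d) \le f(u) + \alpha\,\nabla f(u)^{T}d$, with $\nabla f(u)^{T}d = -g^{T}H^{-1}g \le -\|g\|^2/M$. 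For the backtracking branch I would use $M$-smoothness: $\phi'(t)\le \phi'(0) + M\|d\|^2 t$, and since $-\phi'(0)=g^{T}H^{-1}g\ge\|g\|^2/M$ while $\|d\|^2\le\|g\|^2/m^2$, the test $\phi'(t)\le 0$ is guaranteed for every $t\le m^2/M^2$. Hence backtracking halts with a step bounded below by $\underline t \approx m^2/(2M^2)$, and because $\phi'(t)\le 0$ forces $t$ to stay below the one-dimensional minimizer $t^{\star}$ of $\phi$, the step again decreases $f$. Integrating the smoothness bound on $-\phi'$ over $[0,\underline t]$ and using $-\phi'(0)\ge\|g\|^2/M$, $\|d\|\le\|g\|/m$ shows that in \emph{either} branch $f(\uk)-f(\ukp) \ge c\,\|g^{(k)}\|^2$ for an explicit constant $c=c(m,M,\alpha)>0$.

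The global phase is then immediate: summing the per-step decreases and using that $f$ is bounded below on $S$ gives $\sum_k\|g^{(k)}\|^2 \le (f(\uinit)-\min_S f)/c < \infty$, so $\|g^{(k)}\|\to 0$ and there is a finite $k_0=k_0(m,M,L)$ with $\|g^{(k_0)}\| < \eta$, where $\eta$ is the threshold fixed in the final step.

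For the local phase I would Taylor-expand using the Lipschitz Hessian: $\phi'(t) = (1-t)\phi'(0) + d^{T}R(t)$ with $\|R(t)\|\le\tfrac{L}{2}t^2\|d\|^2$, so $\tfrac12(\phi'(\tfrac12)+\phi'(1)) = \tfrac14\phi'(0) + O(L\|d\|^3)$; since $-\phi'(0)\ge\|g\|^2/M$ and $\|d\|\le\|g\|/m$, the first-branch test $\tfrac14\phi'(0)+O(L\|d\|^3)\le\alpha\phi'(0)$ holds once $\|g\|$ drops below an explicit bound---and this is exactly where the analysis requires $\alpha<\tfrac14$, to leave room for the cubic remainder. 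Once the full step is always taken, the standard estimate
\[
\|\nabla f(\ukp)\| = \Big\|\int_0^1\big(\nabla^2 f(\uk+sd)-\nabla^2 f(\uk)\big)\,d\,ds\Big\| \le \tfrac{L}{2}\|d\|^2 \le \tfrac{L}{2m^2}\|\nabla f(\uk)\|^2
\]
gives quadratic convergence of $\|g^{(k)}\|$; choosing $\eta=\min(\eta_1,\,c'm^2/L)$ with $c'<1$ small also makes the iteration a contraction, so the region where the full step is accepted is invariant and the iterates converge quadratically to $u^{\star}$ for all $k\ge k_0$. The two genuinely nontrivial points, which I would treat most carefully, are the convexity bridge from the directional-derivative test to true sufficient decrease and the $\alpha<\tfrac14$ threshold that lets the full Newton step be accepted in the limit; everything else reduces to the textbook Newton estimates.
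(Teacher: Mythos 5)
Your proposal is correct and follows the same three-phase skeleton as the paper's proof: a damped phase with per-step decrease proportional to $\|\nabla f\|^2$, a threshold $\eta$ on the gradient below which the first termination test accepts the full step $t=1$, and the standard quadratic-convergence argument cited from Boyd--Vandenberghe. Two details differ. First, in the backtracking branch you lower-bound the accepted step directly (the test $\phi'(t)\le 0$ is guaranteed for all $t\le m^2/M^2$, so the returned step is at least of order $m^2/M^2$) and then integrate the smoothness bound on $-\phi'$; the paper instead sandwiches the returned $t$ between $t'/2$ and the one-dimensional minimizer $t'$ and uses midpoint convexity, $\tilde{f}(t'/2)\le\frac{1}{2}(\tilde{f}(0)+\tilde{f}(t'))$, together with the minimum of the bounding quadratic. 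Both give a decrease of order $\|\nabla f\|^2$, and your variant has the small advantage of not relying on the paper's (not fully justified) claim that failure of the first test forces $\tilde{g}(1)>0$. Second, and more substantively, your threshold $\alpha<\frac{1}{4}$ is the correct one: for exactly quadratic $f$ one has $\tilde{g}(t)=(1-t)\tilde{g}(0)$, so the tested quantity $\frac{1}{2}\left(\tilde{g}(\frac{1}{2})+\tilde{g}(1)\right)$ equals $\frac{1}{4}\tilde{g}(0)$, and acceptance of $t=1$ in the limit requires $\alpha\le\frac{1}{4}$. The paper's displayed estimate $\lambda^2\left(-\frac{1}{2}+\frac{5}{8}m^{-3/2}\lambda L\right)$ is actually a bound on the sum $\tilde{g}(1)+\tilde{g}(\frac{1}{2})$ rather than on half of it, which is how it arrives at the weaker-looking requirement $\frac{1}{2}-\alpha>0$; restoring the factor $\frac{1}{2}$ reproduces exactly your condition, with $\eta$ of order $(\frac{1}{4}-\alpha)m^2/L$. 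Everything else in your write-up---the bridge from the directional-derivative test to genuine Armijo decrease via monotonicity of $\phi'$, the summability of $\|g^{(k)}\|^2$, and the final Lipschitz-Hessian contraction estimate---matches the paper's argument.
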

\begin{proof}
The proof largely follows the proof of convergence of Newton's method with the standard backtracking line search, using Armijo's condition, which requires evaluation of the function $f$. In comparison, the algorithm above uses only its gradient and Hessian.

The proof proceeds in three steps. 

\begin{enumerate} 
\item First, we show that for any $\eta > 0$, if $\|\df(\uk)\| \geq \eta$, then there is a $\gamma(\eta) > 0$ such that
\begin{equation}
f(\ukp) - f(\uk) \leq -\gamma
\end{equation}
As the function value  cannot go below the minimum value $p^*$, 
the number of iterations required to reach any value of gradient is finite, bounded by $|f(\uinit) - p^*|/\gamma$. 
\item Next, we show that there is a choice of $\eta > 0$, such that our line search always chooses the full Newton step ($t=1$) if 
$\|\df(x)\| \leq \eta$, i.e., after a finite number of steps $k_0(m,M,L)$, we have a standard Newton method with quadratic convergence. 
\item The remainder of the argument is exactly the same as in 
\cite[Sec. 9.5.3]{boyd2004convex}: one shows that for all steps $k \geq k_0$, $\|\df(x)\| \leq \eta$, and infer quadratic convergence. 
\end{enumerate}
We only need to prove the first two steps, as these depend on the specific choice of the line search. 
Define the \emph{Newton decrement} as 
\[
\lambda^2(u) = \dfu^T\ddfu^{-1}\dfu = -\dfu^T d = d^T \ddfu d, 
\] 
where $d = -\ddf^{-1}\df$ is the Newton search direction. 
Below, we drop explicit dependence on $u$ for brevity, and only explicitly indicate the dependence on values different from 
$u$, e.g., $u + td$.

\paragraph{Damped Newton phase.} 
Suppose $\|\df\| \geq \eta$; then it follows from the bounds
$m\|v\|^2 \leq v^T \ddf v \leq M\|v\|^2$, that 
\[
f(u+td) \leq f + t \dfu^T d + \frac{M}{2}\|d\|^2 t^2
\]
As $\lmsq = d^T \ddf d \geq m \|d\|^2$, then $\|d\|^2 \leq  \lmsq/m$,
\[
f(u+td) \leq f - t\lmsq + \frac{M}{2m}\lmsq t^2
\]
Denote $\pg(t) = d^T \df(u+td)$, and 
$\tf(t) = f(u + td)$.
Consider first the case when the first line search 
exit condition is satisfied,  i.e., $\pg(1) + \pg(\frac{1}{2}) \leq 2\alpha \pg(0)$.
Observe that as $\pg(0) < 0$ and $\pg(t)$ is increasing, (Newton direction is a descent direction), the  line search inequality can only hold
if $\pg(\frac{1}{2}) < 0$; if $\pg(1) \leq 0$, then the decrease of the function is at least $\int_0^1 \pg(t)dt \leq \pg(1)$. If $\pg(1) > 0$, then 
the decrease  is estimated by 
\[
\begin{split}
\tf(t) - \tf(0) &= \int_0^{\frac{1}{2}} \pg(s) ds + \int_{\frac{1}{2}}^1 \pg(s) ds  \leq  \int_0^{\frac{1}{2}} \max_{[0,\frac{1}{2}]} \pg(s) ds + \int_{\frac{1}{2}}^1 \max_{[\frac{1}{2},1]} \pg(s) ds \\
&= \frac{1}{2}(\pg(\frac{1}{2}) + \pg(1)) \leq \alpha \pg(0) = -\alpha \lmsq \leq -\alpha \frac{1}{M} \|
\df\|^2 \leq -\frac{\alpha \eta^2}{M}
\end{split}
\]
as  $\pg(s)$ is increasing on the interval due to strong convexity. 
Next, consider the case when the second termination condition in the 
line search is satisfied but not the first, i.e., the value returned is $t$ for which $\pg(t) < 0$. 
In this case, Let $t'$ be the point along the search direction for which  $\pg(t') = 0$, i.e., the point where $f(u+td)$ is minimal;  $t' < 1$ because it follows from the failure of the first condition that 
$\pg(1) > 0$.
Then $t'/2 \leq t \leq t'$, as we half the value of t at each step of the line search. As $\tf(t)$ is decreasing up to $t'$, 
$\tf(t) \leq \tf(t'/2)$. Observe that by convexity 
$\tf(t) \leq \tf(t'/2) \leq (\tf(0) + \tf(t'))/2$, i.e., the function decreases at $t$ at least by half of the decrease 
at the minimum.  We can estimate the minimum value using
strong convexity and finding the minimum of the bounding quadratic function explicitly:  

\[
\min_{[0,1]} \tf(t) \leq \min_{[0,1]} \tf(0) - t\lmsq + \frac{M}{2m}t^2\lambda^2 = \tf(0) - \frac{m \lmsq}{2M} \leq  \tf(0) -\frac{m}{2M^2}{\eta^2}
\]
We conclude that at every step the energy decreases at 
least by $\eta^2 \min(\frac{m}{4M^2}, \frac{\alpha}{M}) = \gamma(\eta)$.
Note that the first line search termination condition 
is not necessary for this to hold. If this condition is omitted, the step is either 1, if $\pg(1) \leq 0$,  or $t' < t$ as before. In the first case, 
the function decreases on $[0,1]$ so the decrease is greater than the 
bound for the decrease at $t = m/M$, which we use in the case when $t' < t$.
The method converges without the first termination condition, but the rate of convergence may be suboptimal. The first condition is needed to prove the second part. 
\paragraph{Transition to quadratically convergent phase.} 
Next, we show that there is a choice of $\eta$, for which our line search is guaranteed to choose step $t=1$.
Starting from the Lipschitz condition on $\ddf$, we obtain
\[
|d^T \ddf(u + td) d - d^T \ddfu d |  \leq t L \|d\|^3 
\]
Note that $\pg'(t) = d^T \ddf(u+td) d $, i.e., 
\[
|\pg'(t) - \pg'(0)| \leq t L \|d \|^3 \leq tL m^{-3/2}\lambda^3
\]
using $m\|d\|^2 \leq \lmsq$.
From this, we obtain 
\[
\pg'(t) \leq  \lmsq +  tL m^{-3/2}\lambda^3
\]
Integrating this expression with respect to $t$, we obtain 
\[
\pg(t) \leq -\lmsq + \lmsq t + \frac{t^2 L}{2} m^{-3/2} \lambda^3.
\]
Evaluating these expressions for $\pg(1)$ and $\pg(\frac{1}{2})$, 
we get for the right-hand side of the first termination condition in the line search: 
\[
\frac{1}{2}\left(\pg(1) + \pg(\frac{1}{2}) \right) \leq \lmsq\left(-\frac{1}{2} + \frac{5}{8} m^{-3/2}\lambda L \right) 
\]
The left-hand side $\alpha g(0)$ is simply $-\alpha\lmsq$.
As $\lmsq \leq \|\ddf^{-1}\|\|\df\|^2 \leq m^{-1}\eta^2$, 
we conclude that for the condition to be satisfied (i.e., for 
the line search to accept the step $t=1$) 
it is sufficient to have 
\[
\left(\frac{1}{2} - \frac{5}{8} m^{-2}\eta L \right) \geq \alpha
\]
From this, we obtain the bound for $\eta$: 
\[
\eta \leq \left(\frac{1}{2} -\alpha \right)\frac{8m^2}{5L}
\]
which is similar to the bound obtained in \cite{boyd2004convex}
for the standard line search with the Armijo condition.
\end{proof}

\begin{thm}
The convex function $\cE(u)$ whose minimum yields the 
conformally equivalent metric with prescribed discrete curvature,
satisfies the conditions of Theorem~\ref{th:general}, with $u$ 
restricted to a subspace $W \subset \bbR^n$ eliminating the nullspace of the Hessian of $\cE$, e.g., using the constraint $u_0 = 0$.
\end{thm}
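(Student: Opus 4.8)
The plan is to verify, one hypothesis at a time, the conditions of Theorem~\ref{th:general} for the specific energy $\cE$: convexity, twice differentiability, a finite upper bound $M$ and a strictly positive lower bound $m$ on the singular values of $\nabla^2\cE$ over the sublevel set $S$, the Lipschitz property of $\nabla^2\cE$ on $S$, and the boundedness of $S$ itself. I would begin by recalling the explicit structure of $\cE$ as used in \cite{Campen:2021}: a sum of per-triangle terms built from the Lobachevsky/Milnor function of the angles of the conformally scaled metric, minus a linear term $-\sum_i \hat\Theta_i u_i$ encoding the prescribed curvatures. Two facts I would quote from the discrete-conformal literature are that (i) the $i$-th component of $\nabla\cE(u)$ is the angle-sum defect $\Theta_i(u) - \hat\Theta_i$, and (ii) $\nabla^2\cE(u)$ is the cotangent Laplacian of the current intrinsically Delaunay triangulation (in the boundary setting of \cite{Campen:2021}, its Penner/hyperbolic analogue). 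From (ii) convexity is immediate, since the Delaunay weights are nonnegative; the one-dimensional kernel is spanned by the constant vector (the global-scaling direction), and restricting to $W$ via $u_0 = 0$ removes exactly this kernel, so that the known uniqueness results make $\nabla^2\cE$ positive definite on $W$ at every point.

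The genuinely global ingredient, and the step I expect to be the main obstacle, is the boundedness of $S$ (equivalently, coercivity of $\cE$ on $W$); here pointwise strict convexity does not suffice. I would prove coercivity by studying the asymptotic slope of $\cE$ along each ray $u = u^{(0)} + t v$ with $0 \neq v \in W$: because $\cE$ restricted to the ray is convex, it is enough to show $\lim_{t\to\infty} \frac{d}{dt}\cE(u^{(0)}+tv) > 0$. As $t\to\infty$ the scaled angles converge to limiting values determined by which scale factors dominate, so the asymptotic slope equals $\sum_i(\Theta_i^\infty(v) - \hat\Theta_i)\,v_i$. Showing this is strictly positive for every admissible target $\hat\Theta$ is precisely the combinatorial admissibility argument underlying the existence theory for discrete conformal metrics (Luo; Gu--Luo--Sun--Wu), which I would adapt to the boundary case; this is where the non-local, combinatorial content of the problem lives.

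Once $S$ is known to be bounded, the remaining conditions follow from compactness together with uniform non-degeneracy of the metric. On the compact set $S\cap W$ the scale factors are bounded, hence the scaled lengths $\tilde\ell_{ij} = e^{(u_i+u_j)/2}\ell_{ij}$ are bounded both above and away from zero; all triangles therefore stay uniformly non-degenerate, and the cotangent weights together with their derivatives are uniformly bounded. Continuity of $\nabla^2\cE$ on $S\cap W$, combined with its pointwise positive definiteness, then yields a finite maximum eigenvalue $M$ and a strictly positive minimum eigenvalue $m$, the latter because the smallest eigenvalue is a continuous strictly positive function on a compact set and hence attains a positive minimum.

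Finally, for the Lipschitz bound on $\nabla^2\cE$ I would argue cell-wise and then globalize. Away from cocircular (flip) configurations the cotangent weights are real-analytic in $u$, so within each flip cell $\nabla^2\cE$ is smooth and Lipschitz with a constant controlled by the uniform bounds above. Across a flip the energy is $C^2$ (the flipped-edge weight vanishes, so the Hessian entries join continuously) while only the third derivatives jump, and these one-sided third derivatives remain uniformly bounded on $S$ by the same uniform non-degeneracy. Taking the maximum of the cell-wise constants over $S\cap W$ produces a single Lipschitz constant $L$, which completes the verification of all hypotheses of Theorem~\ref{th:general}.
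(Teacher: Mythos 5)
Your overall decomposition matches the paper's (convexity and positive definiteness on $W$, boundedness of $S$, eigenvalue bounds by compactness and continuity, cell-wise Lipschitz estimate for the Hessian), but you take a genuinely different and substantially harder route on the one step you correctly identify as the crux: boundedness of $S$. You propose to prove coercivity directly by computing the asymptotic slope of $\cE$ along every ray in $W$ and invoking the combinatorial admissibility argument of the existence theory, adapted to the boundary case. The paper avoids this entirely: it takes the \emph{existence} of the minimizer on $W$ as known, notes that strict convexity (positive definiteness of the Hessian \cite{springborn2020ideal}) makes the minimum set a single point, and then applies \cite[Corollary 8.7.1]{rockafellar2015convex} — a closed proper convex function with one nonempty bounded level set has all level sets bounded — to conclude that the sublevel set $S$ is bounded. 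Your route is more self-contained and would in effect re-derive the existence theorem, but it leaves the genuinely difficult combinatorial step as an unproved adaptation; the paper's route is a two-line reduction to cited results. The remaining steps (compactness plus continuity of eigenvalues for $m$ and $M$; per-Penner-cell bounds on third derivatives, finiteness of the number of cells, and $C^2$ matching of the Hessian across cell boundaries for the Lipschitz constant $L$) are essentially identical in both arguments, and your explicit remark that the flipped-edge cotangent weight vanishes at a flip, so the Hessian joins continuously, is a useful detail the paper leaves implicit.

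One genuine gap: you assert that because the scale factors are bounded on the compact set $S \cap W$, the scaled lengths $e^{(u_i+u_j)/2}\ell_{ij}$ are bounded above and away from zero, and that \emph{therefore} all triangles are uniformly non-degenerate. That implication is false — lengths bounded above and below do not prevent the triangle inequality from degenerating (consider lengths $1$, $1$, $2$), and indeed for arbitrary $u$ the scaled lengths of a fixed triangulation need not form triangles at all. The correct source of non-degeneracy, which the paper uses, is that the (possibly non-strict) intrinsic Delaunay condition implies the \emph{strict} triangle inequality \cite{penner1987decorated}; pointwise positivity of the angles then combines with continuity and compactness to give a uniform lower bound on the minimal angle. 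Without routing the argument through the Delaunay property, your uniform bounds on the cotangent weights and on the third derivatives — and hence your constants $m$, $M$, and $L$ — are not justified.
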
 
\begin{proof}
The key fact from which the statement follows relatively easily, is that the set $S$ for the function $\cE$ is bounded, and therefore compact, for 
any choice of the starting point. As we know that the minimum of $\cE$ on $W$ exists and $\cE$ is strictly convex (as the Hessian is positive definite \cite{springborn2020ideal}) by \cite[Corrollary 8.7.1]{rockafellar2015convex}, its level sets 
are bounded; as $S$ is a sublevel set, it is bounded. 

The second critical observation is that the Hessian entries depend continuously only on the angles of an intrinsic Delaunay triangulation, and these angles are strictly nonzero at all points
of $S$: this follows from the fact that \emph{strict} triangle 
inequality is implied by the (possibly nonstrict) Delaunay condition inequality \cite{penner1987decorated}. 

As eigenvalues of a matrix depend continuously on its coefficients, 
in particular, minimal and maximal eigenvalues are continuous functions on $S$. As $S$ is compact, and for any point $u$ the minimal eigenvalue  $\mu_{min} (\nabla^2\cE(u))$ is positive, 
there is a lower bound $m > 0$ for eigenvalues of the Hessian 
at all points of $S$; in the same way, the upper bound $M$  is defined. 

To establish that $\nabla^2\cE(u)$ is Lipschitz, we observe that 
the derivatives of the entries of the Hessian are derivatives of cotangents of angles of triangles of a Delaunay triangulation; as derivatives of angles are cotangents \cite{springborn2020ideal}, 
the resulting third derivatives are also trigonometric functions of angles. For each Penner cell $P$ (i.e., the subset of $W$ for 
which the Delaunay triangulation used to compute the energy, its gradient and Hessian remains constant), the set $P \cap S$, 
is bounded and compact.  For a fixed Delaunay triangulation, angles depend continuously on $u$, hence the minimal angle $\alpha_{min}(u)$ depends on $u$
continuously as a lower envelope of a set of continuous functions. 
As at all points $u$ of $P \cap S$, which is compact, 
$\alpha_{min}(u) > 0$, we conclude that it is bounded from below on each Penner cell intersecting $S$.  As the total number of Penner cells is finite \cite{springborn2020ideal},   there is a global lower bound on minimal angle of the triangulation on all of $S$, 
and, as a consequence, third derivatives are piecewise continuous, 
with discontinuities only at the cell boundaries, and bounded, i.e., all third derivative value jumps on the Penner cell boundaries are bounded. We conclude that the function is Lipschitz. 
\end{proof}
\paragraph{Why do we need the extra condition in the line search?} 
The algorithm converges without this condition; it is only needed to prove that the algorithm converges quadratically (eventually, after some, possibly large, number of iterations in the damped phase).
An essential requirement for quadratic convergence is that a full Newton step is taken when the point is close to the minimum. 
However, the  line search that terminates only when $\pg(t) < 0$ does not guarantee this in general (although we observe this in practice for the specific energy we consider). 
A simple counterexample is the function $f(t) = x^2 + \epsilon x^3$.
A full Newton step will always  attempt to go to zero (as this is the minimum of the quadratic part), where the gradient of the function is positive, so the full step will be rejected, even if the line search starting point is very close to zero. Instead, close enough to zero, the second step size $t=\frac{1}{2}$ will always be accepted, so the convergence will be slower.
\bibliographystyle{plain}
\bibliography{main.bib}
\end{document}